\providecommand{\U}[1]{\protect\rule{.1in}{.1in}}
\newtheorem{theorem}{Theorem}
\theoremstyle{plain}
\newtheorem{corollary}{Corollary}
\newtheorem{definition}{Definition}
\newtheorem{lemma}{Lemma}
\newtheorem{proposition}{Proposition}
\numberwithin{equation}{section}
\begin{document}
\title[Rings satisfying $\ast$-property]{Rings satisfying $\ast$-property}
\author{K\"{u}r\c{s}at Hakan Oral}
\address{Y\i ld\i z Technical University,Department of mathematics Davutpa\c{s}a,34210,Esenler,Istanbul,TURKEY}
\email{khoral@yildiz.edu.tr}
\author{Bayram Ali Ersoy}
\curraddr{Y\i ld\i z Technical University,Department of mathematics Davutpa\c{s}a,34210,Esenler,Istanbul,TURKEY}
\email{ersoya@yildiz.edu.tr}
\author{\"{U}nsal Tekir}
\address{Marmara University, Department of mathematics, Ziverbey, 34722,Goztepe,Istanbul,TURKEY}
\email{utekir@marmara.edu.tr}
\thanks{This paper is in final form and no version of it will be submitted for
publication elsewhere.}
\date{June 16, 2013}
\subjclass[2010]{Primary 13A15, 16E50; Secondary 13E10}
\keywords{Ideals, von Neumann regular rings, Artinian rings}

\begin{abstract}
In this paper we will investigate commutative rings which have the $\ast
$-property. We say that a ring $R$ satisfy $\ast-$property if for any family
of ideals $\left\{  I_{\alpha}\right\}  _{\alpha\in S}$ of $R$ in which $S$ is
an index set, there exists a finite subset\ $S^{\prime}$ of $S$ such that
$\sqrt{\underset{\alpha\in S}{%
{\displaystyle\bigcap}
}I_{\alpha}}=\underset{\alpha\in S^{\prime}}{%
{\displaystyle\bigcap}
}\sqrt{I_{\alpha}}$. We will show that any integral domain which satisfy
$\ast-$property is a field. Furthermore, these rings are zero-dimensional.
After this we give relations between these rings and Artinian rings.

\end{abstract}
\maketitle

\section{Introduction}

Throughout this paper, $R$ denotes a commutative ring with identity and the
set of all ideals of $R$ is denoted by $L\left(  R\right)  $.

M. Arapovic gives the following charecterization of imbeddibility of a ring
into a 0-dimensional ring: A ring $R$ is embeddable in a zero-dimensional ring
if and only if $R$ has a family of primary ideals $\left\{  Q_{\lambda
}\right\}  _{\lambda\in\Lambda}$ such that, $\left(  A1\right)  \underset
{\lambda\in\Lambda}{%
{\displaystyle\bigcap}
}Q_{\lambda}=0$ \ and $\left(  A2\right)  $ For each $a\in R$, there is an
$n\in%
\mathbb{N}
$ such that for all $\lambda\in\Lambda$, if $a\in\sqrt{Q_{\lambda}}$ then
$a^{n}\in Q_{\lambda}$ [2, Theorem 7]. Jim Brewer and Fred Richman [3] give an
equivalent condition for the condition (A2) in their paper titled "Subrings of
zero-dimensional rings" and characterize the condition (A2). A family
$\left\{  Q_{\lambda}\right\}  _{\lambda\in\Lambda}$ of ideals in a ring $R$
satisfies (A2) if and only if for each (countable) subset $\Gamma
\subseteq\Lambda$, $\sqrt{\underset{\lambda\in\Gamma}{%
{\displaystyle\bigcap}
}Q_{\lambda}}=\underset{\lambda\in\Gamma}{%
{\displaystyle\bigcap}
}\sqrt{Q_{\lambda}}$. With this motivation if we work this equivalent
condition, we see that some rings have further from this property. For an
example, if $R$ is an Artinian ring then for any subset $\left\{  I_{\alpha
}\right\}  _{\alpha\in S}$ of $L(R)$,%

\[
\sqrt{\underset{\alpha\in S}{%
{\displaystyle\bigcap}
}I_{\alpha}}=\underset{\alpha\in S^{\prime}}{%
{\displaystyle\bigcap}
}\sqrt{I_{\alpha}}%
\]
\ \ \ \ \ for some finite subset $S^{\prime}$ of $S$ [Corollary 1]. This
equality is called $\ast$-property. We call a ring $R$ satisfying $\ast
$-property if any family of ideals of the ring $R$ satisfies $\ast$-property.
After this we give the correspondence between such rings and Artinian rings.
Then if we use a result of the paper Subrings of zero-dimensional rings by Jim
Brewer and Fred Richman, we get that a ring satisfying $\ast$-property is a
zero-dimensional, in the sense of Krull dimension. Further we give an
equivalent condition with rings satisfying $\ast$-property, Artinian rings,
zero-dimensional rings and $\pi$-regular rings.

\section{Rings satisfying $\ast$-property}

\begin{definition}
Let $R$ be a ring and $S$ be an index set. If $\sqrt{\underset{\alpha\in S}{%
{\displaystyle\bigcap}
}I_{\alpha}}=\underset{\alpha\in S^{\prime}}{%
{\displaystyle\bigcap}
}\sqrt{I_{\alpha}}$ $\left(  \text{for some finite subset }S^{\prime}\text{of
}S\right)  $is satisfied for any arbitrary family $\left\{  I_{\alpha
}\right\}  _{\alpha\in S}$ of ideals of $R$, then $R$ is said to satisfy
$\ast$-property.\newline
\end{definition}

It is clear that every finite ring satisfies the $\ast$-property. For a
counterexample it is clear that the ring of integers does not satisfy $\ast$-property.

\begin{theorem}
If $R$ satisfies the d.c.c. for radical ideals then $R$ satisfies $\ast$-property.
\end{theorem}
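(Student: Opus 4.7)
The plan is to use the d.c.c.\ on radical ideals to single out, from the given family $\{I_\alpha\}_{\alpha\in S}$, a finite subset $S'\subseteq S$ by minimising a natural collection of radical ideals, and then to argue that this $S'$ witnesses the $\ast$-property.

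First I would introduce $\mathcal{F}=\{\bigcap_{\alpha\in F}\sqrt{I_\alpha}\mid F\subseteq S\text{ finite}\}$; each of its members is a radical ideal, and for finite $F$ it equals $\sqrt{\bigcap_{\alpha\in F}I_\alpha}$. By the d.c.c., $\mathcal{F}$ has a minimal element, which I write as $M=\bigcap_{\alpha\in S'}\sqrt{I_\alpha}$ with $S'\subseteq S$ finite. Minimality then forces $M\subseteq\sqrt{I_\beta}$ for every $\beta\in S$, because $M\cap\sqrt{I_\beta}=\bigcap_{\alpha\in S'\cup\{\beta\}}\sqrt{I_\alpha}$ lies in $\mathcal{F}$ and is contained in $M$, so by minimality equals $M$. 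Consequently $M\subseteq\bigcap_{\beta\in S}\sqrt{I_\beta}$, and since the reverse inclusion is automatic from $S'\subseteq S$, we obtain $M=\bigcap_{\beta\in S}\sqrt{I_\beta}$. The containment $\sqrt{\bigcap_{\alpha\in S}I_\alpha}\subseteq\bigcap_{\beta\in S}\sqrt{I_\beta}=M$ is then immediate.

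The remaining step, and the one I expect to be the main obstacle, is the reverse inclusion $M\subseteq\sqrt{\bigcap_{\alpha\in S}I_\alpha}$: for $x\in M$ the previous step supplies an exponent $n_\beta$ with $x^{n_\beta}\in I_\beta$ for each $\beta\in S$, but placing $x$ in $\sqrt{\bigcap_{\alpha\in S}I_\alpha}$ requires a single exponent $N$ working simultaneously for every $\beta\in S$. I would attempt to extract such a uniform $N$ via a second application of the d.c.c.\ to the descending chain of radical ideals $T_m:=\sqrt{\bigcap_{\beta\in S,\,x^m\in I_\beta}I_\beta}$, $m\in\mathbb{N}$ (as $m$ grows, more $\beta$ are admitted into the intersection, so $T_m$ decreases). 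Translating the stabilisation of $T_m$ at some $m_0$, together with the property $M\subseteq\sqrt{I_\beta}$ for every $\beta$, into the conclusion that $x^{m_0}\in I_\beta$ for every $\beta\in S$ is the delicate hinge on which the proof turns.
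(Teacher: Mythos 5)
The first half of your argument is correct and complete, and in fact more careful than what the paper itself records: minimality of $M=\bigcap_{\alpha\in S'}\sqrt{I_{\alpha}}$ in your family $\mathcal{F}$ does force $M=\bigcap_{\beta\in S}\sqrt{I_{\beta}}$, and $\sqrt{\bigcap_{\alpha\in S}I_{\alpha}}\subseteq M$ follows. But the step you flag as the delicate hinge is not merely delicate --- it is unbridgeable, because the theorem as stated is false. A discrete valuation ring $R$ with maximal ideal $(t)$ has exactly three radical ideals, $(0)\subset(t)\subset R$, so it satisfies the d.c.c.\ on radical ideals; yet for the family $I_{n}=(t^{n})$, $n\geq 1$, one has $\sqrt{\bigcap_{n}I_{n}}=\sqrt{(0)}=(0)$, while $\bigcap_{n\in F}\sqrt{I_{n}}=(t)$ for every finite nonempty $F$, so the $\ast$-property fails (consistently with the paper's own Theorem 2, since a DVR is a domain but not a field). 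In this example your second chain $T_{m}=\sqrt{\bigcap_{\beta:\,x^{m}\in I_{\beta}}I_{\beta}}$, taken with $x=t\in M=(t)$, stabilizes immediately at $(t)$, yet no $m_{0}$ gives $t^{m_{0}}\in I_{\beta}$ for all $\beta$: stabilization of the radicals $T_{m}$ carries no information about a uniform exponent, precisely because passing to radicals collapses all the ideals $(t^{m})$ to the same radical ideal. So no second application of the d.c.c.\ on radical ideals can extract the uniform $N$ you need.

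It is worth knowing that the paper's own proof founders on exactly the point you isolated: it asserts in one line that the d.c.c.\ on radical ideals yields $\sqrt{\bigcap_{\alpha\in\Delta}I_{\alpha}}=\sqrt{\bigcap_{i=1}^{n}I_{\alpha_{i}}}$ for some finite subfamily, which is precisely the unjustified uniformity claim, refuted by the DVR. What is true, and what Corollary 1 of the paper actually needs, is the statement under the d.c.c.\ on \emph{all} ideals: there your minimization strategy works if applied before taking radicals. The family $\{\bigcap_{\alpha\in F}I_{\alpha}\mid F\subseteq S\ \text{finite}\}$ then has a minimal element $N=\bigcap_{\alpha\in S'}I_{\alpha}$, and your own minimality argument (intersect $N$ with any $I_{\beta}$) shows $N=\bigcap_{\alpha\in S}I_{\alpha}$ as an honest equality of ideals; taking radicals and using $\sqrt{I\cap J}=\sqrt{I}\cap\sqrt{J}$ for finite intersections finishes. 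So your instinct to minimize over finite subfamilies was the right one --- it simply must be applied to the ideals themselves rather than their radicals, and that requires the stronger Artinian hypothesis.
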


\begin{proof}
Let $\left\{  I_{\alpha}\right\}  _{\alpha\in\Delta}$ be a family of ideals of
$R$. \ Now consider $\sqrt{\underset{\alpha\in\Delta}{\overset{}{%
{\displaystyle\bigcap}
}}I_{\alpha}}$ for any arbitrary family $\left\{  I_{\alpha}\right\}
_{\alpha\in S}$ of ideals of $R$ and $\Delta$ index set. Since $R $
\ satisfies the d.c.c for radical ideals, $\sqrt{\underset{\alpha\in\Delta
}{\overset{}{%
{\displaystyle\bigcap}
}}I_{\alpha}}=\sqrt{%
{\displaystyle\bigcap\limits_{i=1}^{n}}
I_{\alpha_{i}}}=%
{\displaystyle\bigcap\limits_{i=1}^{n}}
\sqrt{I_{\alpha_{i}}}$ \ for some finite subset $\left\{  \alpha_{1}%
,\alpha_{2},...,\alpha_{n}\right\}  $ of $\Delta$ where $n$ is a positive integer.
\end{proof}

\begin{corollary}
Every Artinian ring satisfies $\ast$-property.
\end{corollary}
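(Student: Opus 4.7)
The plan is to derive this corollary as a direct consequence of the preceding theorem, which says that any ring satisfying the descending chain condition on radical ideals satisfies the $\ast$-property. So the entire task reduces to verifying that an Artinian ring satisfies d.c.c.\ on radical ideals.

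First, I would recall the definition: a ring $R$ is Artinian precisely when its lattice $L(R)$ of \emph{all} ideals satisfies the descending chain condition. Now, every radical ideal is in particular an ideal, so any descending chain $J_{1}\supseteq J_{2}\supseteq J_{3}\supseteq\cdots$ of radical ideals of $R$ is a descending chain in $L(R)$. By the Artinian hypothesis, this chain must stabilize: there exists $n$ such that $J_{n}=J_{n+1}=\cdots$. Hence $R$ automatically satisfies d.c.c.\ on the (smaller) sublattice of radical ideals.

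With this observation in hand, Theorem~1 applies and yields that $R$ satisfies the $\ast$-property, completing the proof. There is no real obstacle: the content of the corollary is entirely absorbed into Theorem~1, and the only step needed is the trivial remark that d.c.c.\ on all ideals restricts to d.c.c.\ on radical ideals. A two-line proof of the form ``An Artinian ring has d.c.c.\ on ideals, hence on radical ideals, so by Theorem~1 it satisfies the $\ast$-property'' should suffice.
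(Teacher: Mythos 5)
Your proof is correct and matches the paper's intent exactly: the corollary is stated immediately after Theorem~1 with no separate argument, precisely because an Artinian ring has d.c.c.\ on all ideals and hence on radical ideals, so Theorem~1 applies. Nothing is missing.
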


\begin{theorem}
Let $R$ be an integral domain. If $R$ satisfies $\ast$-property, then $R$ is a field.
\end{theorem}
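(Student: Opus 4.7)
The plan is to argue by contradiction: assume $R$ is an integral domain satisfying $\ast$-property but not a field, so there is a nonzero non-unit $a\in R$. I will apply the $\ast$-property to the descending family of principal ideals $\{(a^n)\}_{n\in\mathbb{N}}$ and derive that $a$ must be a unit.

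First I would observe that $\sqrt{(a^n)}=\sqrt{(a)}$ for every $n\geq 1$, so the intersection of radicals over any nonempty finite subset of indices collapses to $\sqrt{(a)}$. By the $\ast$-property applied to this family, there is a finite $S'\subseteq\mathbb{N}$ with
\[
\sqrt{\bigcap_{n\in\mathbb{N}}(a^{n})}=\bigcap_{n\in S'}\sqrt{(a^{n})}=\sqrt{(a)}.
\]
In particular $a\in\sqrt{\bigcap_{n}(a^{n})}$, so there exists $k\geq 1$ with $a^{k}\in\bigcap_{n}(a^{n})$. Taking $n=k+1$ gives $a^{k}=a^{k+1}b$ for some $b\in R$.

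At this point I would use that $R$ is an integral domain: since $a\neq 0$, the element $a^{k}$ is nonzero, and cancelling it from $a^{k}=a^{k+1}b$ yields $1=ab$, so $a$ is a unit, contradicting our choice. Hence every nonzero element of $R$ is a unit, and $R$ is a field.

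The main (and really only) obstacle is choosing the right family of ideals so that the intersection of radicals is forced to contain $a$ while the intersection of the ideals themselves is small enough to produce the cancellation $a^{k}=a^{k+1}b$; the powers $(a^n)$ accomplish exactly this because their radicals all agree. Everything else is a direct application of the $\ast$-property and of cancellation in an integral domain.
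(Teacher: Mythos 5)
Your proof is correct, but it takes a genuinely different route from the paper's. The paper applies the $\ast$-property to the family of \emph{all} nonzero ideals of $R$: it first shows the intersection $I$ of all of them is nonzero (if $I=(0)$, the property would give $(0)=\bigcap_{j=1}^{n}\sqrt{I_{\beta_j}}$ for finitely many nonzero ideals, contradicted by the product $a_{\beta_1}\cdots a_{\beta_n}$ of nonzero elements, which is nonzero in a domain and lies in every $I_{\beta_j}$), then exploits the resulting unique smallest nonzero ideal to get $(x)=I=(x^{2})$ and cancels. You instead fix a purported nonzero non-unit $a$ and apply the property only to the countable chain $\{(a^{n})\}$, using $\sqrt{(a^{n})}=\sqrt{(a)}$ to force $a^{k}\in(a^{k+1})$ and then cancelling. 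Your argument is more local and economical: it needs the property only for a single descending chain of principal ideals rather than for the full lattice of nonzero ideals, it uses the domain hypothesis once (cancellation) instead of twice, and it essentially proves directly that every element of a $\ast$-property domain is $\pi$-regular, anticipating the paper's later zero-dimensionality result (Corollary 3); the paper's approach, in exchange, yields the structural byproduct that a $\ast$-property domain has a smallest nonzero ideal. One trivial point worth a sentence in your write-up: if the finite subset $S'$ produced by the $\ast$-property were empty, the empty intersection would be $R$, giving $1\in\sqrt{\bigcap_{n}(a^{n})}$ and hence $(a)=R$ directly, so that degenerate case also contradicts $a$ being a non-unit.
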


\begin{proof}
Let $I=\underset{\alpha\in S}{%
{\displaystyle\bigcap}
}I_{\alpha}$ where $I_{\alpha}$'s are all nonzero ideals of $R$ for $\alpha\in
S$. Then $I\neq(0)$. Indeed, if $I=(0),$ then%
\[
(0)=\sqrt{I}=\sqrt{\underset{\alpha\in S}{%
{\displaystyle\bigcap}
}I_{\alpha}}%
\]
and since $R$ is satisfying $\ast$-property we get%
\[
(0)=\sqrt{I}=\sqrt{\underset{\alpha\in S}{%
{\displaystyle\bigcap}
}I_{\alpha}}=\underset{\beta_{j}\in S^{\prime}}{%
{\displaystyle\bigcap}
}\sqrt{I_{\beta_{j}}}\text{ }%
\]
where $S^{\prime}=\left\{  \beta_{1},...,\beta_{n}\right\}  $ . Since all
$I_{\beta_{j}}\neq(0),$ there exists $0\neq a_{\beta_{j}}\in I_{\beta_{j}} $
for all $\beta_{j}\in S^{\prime}$. Therefore,%
\[
0\neq a=a_{\beta_{1}}a_{\beta_{2}}...a_{\beta_{n}}\in\underset{\beta_{j}\in
S^{\prime}}{%
{\displaystyle\bigcap}
}I_{\beta_{j}}\subset\underset{\beta_{j}\in S^{\prime}}{%
{\displaystyle\bigcap}
}\sqrt{I_{\beta_{j}}}=\sqrt{I}%
\]
which is a contradiction. Thus $I\neq(0)$. Choose a nonzero element $x\in I$.
Since $I$ is the unique smallest non-zero ideal, we get that $(x)=I=(x^{2}) $.
Thus $x=rx^{2}$ for some $r\in R$. This shows that $x$ is a unit. Hence $R $
is a field.
\end{proof}

\begin{lemma}
Let $R$ be a ring satisfying $\ast$-property. Then every homomorphic image of
$R$ satisfies $\ast$-property.
\end{lemma}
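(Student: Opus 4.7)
The plan is to reduce to the case of a quotient ring $R/J$ via the first isomorphism theorem, then exploit the fact that both the radical operation and arbitrary intersections commute with passage to a quotient (provided we restrict to ideals containing the kernel, which the correspondence theorem lets us do).

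First I would fix a surjective homomorphism $\varphi\colon R\to R'$ and set $J=\ker\varphi$, so $R'\cong R/J$; it then suffices to verify the $\ast$-property for $R/J$. Given an arbitrary family $\{\bar I_\alpha\}_{\alpha\in S}$ of ideals of $R/J$, I would pull each $\bar I_\alpha$ back to a unique ideal $I_\alpha$ of $R$ containing $J$, so that $\bar I_\alpha=I_\alpha/J$.

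Next I would apply the $\ast$-property of $R$ to the family $\{I_\alpha\}_{\alpha\in S}$ to obtain a finite $S'\subseteq S$ with $\sqrt{\bigcap_{\alpha\in S}I_\alpha}=\bigcap_{\alpha\in S'}\sqrt{I_\alpha}$. I would then transport this equality down to $R/J$ using the two standard identities $\bigcap_\alpha(I_\alpha/J)=(\bigcap_\alpha I_\alpha)/J$ and $\sqrt{I/J}=\sqrt{I}/J$ for any ideal $I\supseteq J$. Concatenating these gives
\[
\sqrt{\textstyle\bigcap_{\alpha\in S}\bar I_\alpha}=\sqrt{\textstyle\bigcap_{\alpha\in S}I_\alpha}\big/J=\Bigl(\textstyle\bigcap_{\alpha\in S'}\sqrt{I_\alpha}\Bigr)\big/J=\textstyle\bigcap_{\alpha\in S'}\sqrt{\bar I_\alpha},
\]
which is the desired $\ast$-property for $R/J$ witnessed by the same finite set $S'$.

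There is no serious obstacle here; the argument is essentially a bookkeeping exercise once one notes that the correspondence theorem provides a bijection between ideals of $R/J$ and ideals of $R$ containing $J$, and that this bijection is compatible with both radicals and arbitrary intersections. The only point requiring a moment of care is ensuring that the pulled-back ideals all contain $J$ so that the identity $\sqrt{I/J}=\sqrt{I}/J$ applies to each term as well as to the intersection.
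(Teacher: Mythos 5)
Your proof is correct and follows essentially the same route as the paper: the paper also pulls the family back to ideals of $R$ containing the kernel and uses the facts that intersections and radicals commute with the surjection on such ideals, merely phrased in terms of $f(I_\alpha)$ rather than the quotient $I_\alpha/J$. Your quotient-ring formulation is, if anything, slightly more careful in flagging exactly where the hypothesis $J\subseteq I_\alpha$ is needed.
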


\begin{proof}
Let $f:R\rightarrow R^{\prime}$ be a surjective homomorphism, $R$ is
satisfying $\ast-property$ and $\left\{  I_{\alpha}^{\prime}\right\}
_{\alpha\in S}$ is an arbitrary family of ideals of $R^{\prime}$. Then there
exists a family of ideals $\left\{  I_{\alpha}\right\}  _{\alpha\in S}$ such
that $f\left(  I_{\alpha}\right)  =I_{\alpha}^{\prime}$ and $\ker(f)\subseteq
I_{\alpha}$. Then we get,%
\[
\sqrt{\underset{\alpha\in S}{%
{\displaystyle\bigcap}
}I_{\alpha}^{\prime}}=\sqrt{\underset{\alpha\in S}{%
{\displaystyle\bigcap}
f}\left(  I_{\alpha}\right)  }=\sqrt{f\left(  \underset{\alpha\in S}{%
{\displaystyle\bigcap}
}I_{\alpha}\right)  }=f\left(  \sqrt{\underset{\alpha\in S}{%
{\displaystyle\bigcap}
}I_{\alpha}}\right)  .
\]
Since $R$ satisfies $\ast$-property, there exists a finite subset $S^{\prime}$
of $S$ \ such that $\sqrt{\underset{\alpha\in S}{%
{\displaystyle\bigcap}
}I_{\alpha}}=\underset{\alpha\in S^{\prime}}{%
{\displaystyle\bigcap}
}\sqrt{I_{\alpha}}$. And so%
\[
\sqrt{\underset{\alpha\in S}{%
{\displaystyle\bigcap}
}I_{\alpha}^{\prime}}=f\left(  \underset{\alpha\in S^{\prime}}{%
{\displaystyle\bigcap}
}\sqrt{I_{\alpha}}\right)  =\underset{\alpha\in S^{\prime}}{%
{\displaystyle\bigcap}
f}\left(  \sqrt{I_{\alpha}}\right)  =\underset{\alpha\in S^{\prime}}{%
{\displaystyle\bigcap}
}\left(  \sqrt{f\left(  I_{\alpha}\right)  }\right)  =\underset{\alpha\in
S^{\prime}}{%
{\displaystyle\bigcap}
}\sqrt{I_{\alpha}^{\prime}}\text{.}%
\]

\end{proof}

\begin{corollary}
Let $R$ be a ring satisfying $\ast$-property and $I$ be an ideal of $R$. Then
$R/I$ satisfies $\ast$-property.
\end{corollary}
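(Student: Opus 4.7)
The plan is to observe that this corollary is essentially immediate from the preceding lemma, so no substantive new argument is needed. The key point is that any quotient $R/I$ arises as the codomain of a surjective ring homomorphism.

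First, I would introduce the canonical projection $\pi\colon R\to R/I$ defined by $r\mapsto r+I$. This map is a surjective ring homomorphism with $\ker(\pi)=I$, so $R/I$ is a homomorphic image of $R$ in the sense used in the preceding lemma.

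Next, I would apply the preceding lemma directly, taking $R'=R/I$ and $f=\pi$. Since by hypothesis $R$ satisfies the $\ast$-property, the lemma guarantees that the homomorphic image $R/I$ also satisfies the $\ast$-property. This completes the argument.

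There is no real obstacle here: the work has already been done in the lemma, and the corollary simply specializes the general statement (every homomorphic image) to the standard presentation of homomorphic images as quotients $R/I$. The only minor point worth checking is that the correspondence between ideals of $R/I$ and ideals of $R$ containing $I$ used implicitly in the lemma's proof is applicable, but this is the standard correspondence theorem and requires no further comment.
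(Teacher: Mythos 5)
Your proposal is correct and matches the paper's intent exactly: the paper states this corollary without further argument, as an immediate application of the preceding lemma to the canonical surjection $\pi\colon R\to R/I$ with $\ker(\pi)=I$. Nothing more is needed.
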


\begin{proposition}
Let $R$ and $R^{\prime}$be rings satisfying $\ast$-property. Then $R\times
R^{\prime}$ satisfies $\ast$-property.
\end{proposition}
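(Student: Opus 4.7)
The plan is to exploit the product structure: every ideal $J$ of $R \times R'$ has the form $J = I \times I'$ with $I$ an ideal of $R$ and $I'$ an ideal of $R'$, and both intersection and radical decompose componentwise, i.e.\ $\sqrt{I \times I'} = \sqrt{I} \times \sqrt{I'}$ and $\bigcap_\alpha (I_\alpha \times I'_\alpha) = \bigl(\bigcap_\alpha I_\alpha\bigr) \times \bigl(\bigcap_\alpha I'_\alpha\bigr)$. So I would start the proof by taking an arbitrary family $\{J_\alpha\}_{\alpha \in S}$ of ideals of $R \times R'$ and writing $J_\alpha = I_\alpha \times I'_\alpha$.

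Next, I would apply the $\ast$-property in each factor. Since $R$ satisfies $\ast$-property, there is a finite $S_1 \subseteq S$ with $\sqrt{\bigcap_{\alpha \in S} I_\alpha} = \bigcap_{\alpha \in S_1} \sqrt{I_\alpha}$, and since $R'$ satisfies $\ast$-property, there is a finite $S_2 \subseteq S$ with $\sqrt{\bigcap_{\alpha \in S} I'_\alpha} = \bigcap_{\alpha \in S_2} \sqrt{I'_\alpha}$. The candidate finite subset will be $S' := S_1 \cup S_2$.

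To finish, I would verify that this common $S'$ works for both coordinates simultaneously. For each coordinate, enlarging the index set from $S_1$ (resp.\ $S_2$) to $S'$ is harmless: the inclusion $\bigcap_{\alpha \in S'} \sqrt{I_\alpha} \subseteq \bigcap_{\alpha \in S_1} \sqrt{I_\alpha} = \sqrt{\bigcap_{\alpha \in S} I_\alpha}$ is immediate, while the reverse inclusion holds because $S' \subseteq S$ so $\sqrt{\bigcap_{\alpha \in S} I_\alpha} \subseteq \sqrt{I_\alpha}$ for every $\alpha \in S'$; the same argument applies to the primes over $R'$. Combining,
\[
\sqrt{\underset{\alpha\in S}{\bigcap} J_\alpha} = \sqrt{\underset{\alpha\in S}{\bigcap} I_\alpha} \times \sqrt{\underset{\alpha\in S}{\bigcap} I'_\alpha} = \underset{\alpha\in S'}{\bigcap} \sqrt{I_\alpha} \times \underset{\alpha\in S'}{\bigcap} \sqrt{I'_\alpha} = \underset{\alpha \in S'}{\bigcap} \sqrt{J_\alpha},
\]
which is the desired $\ast$-property equality for $R \times R'$.

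The only real subtlety is the last step, namely that one must use a common finite index set $S'$ for both coordinates rather than the possibly different sets $S_1$ and $S_2$ coming separately from the two $\ast$-properties; taking the union and checking that enlarging each $S_i$ to $S'$ preserves the respective equation is the key observation. The componentwise behavior of radical and intersection in a product ring is routine and I would not belabor it.
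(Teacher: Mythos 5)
Your proof is correct, and since the paper's own proof of this proposition is just ``It is clear,'' you have simply supplied the standard componentwise argument the authors evidently had in mind: ideals of $R\times R'$ decompose as $I\times I'$ (valid because the rings have identity), radicals and intersections act coordinatewise, and the union $S'=S_1\cup S_2$ works in both coordinates. Your attention to the one genuine subtlety --- that enlarging each $S_i$ to the common finite set $S'$ preserves the equalities --- is exactly right and is the only point where care is needed.
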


\begin{proof}
It is clear.
\end{proof}

M. Arapovic, in his paper "On the embedding of a commutative ring into a
0-dimensional Commutative ring" gives the following charecterization of
imbeddibility of a ring into a 0-dimensional ring [2, Theorem 7].

\begin{theorem}
\lbrack2, Theorem 7] A ring $R$ is embeddable in a zero-dimensional ring if
and only if $R$ has a family of primary ideals $\left\{  Q_{\lambda}\right\}
_{\lambda\in\Lambda}$ such that\newline$A1$. $\underset{\lambda\in\Lambda}{%
{\displaystyle\bigcap}
}Q_{\lambda}=0$ and\newline$A2$. For each $a\in R$, there is $n\in%
\mathbb{N}
$ such that for all $\lambda\in\Lambda$, if $a\in\sqrt{Q_{\lambda}}$, then
$a^{n}\in Q_{\lambda}$.
\end{theorem}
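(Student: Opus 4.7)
The plan is to leverage the characterization of zero-dimensional commutative rings as $\pi$-regular rings (for every $a$ there exist $n\in\mathbb{N}$ and $x$ with $a^{n}=a^{n+1}x$), and to handle the two implications separately.

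For the forward direction, assume $R\hookrightarrow T$ with $T$ zero-dimensional. Every prime of $T$ is then maximal, and for each $M\in\mathrm{Max}(T)$ the kernel $Q(M)=\ker(T\to T_{M})$ is $M$-primary because $T_{M}$ is local with nil maximal ideal. Since an element that vanishes in every localization is zero, $\bigcap_{M}Q(M)=0$. I would take $Q_{\lambda}=Q(M)\cap R$ as the family: contractions of primary ideals are primary, and (A1) follows from intersecting the previous identity with $R$. For (A2), fix $a\in R$ and use $\pi$-regularity in $T$ to produce an exponent $n=n(a)$ and $x\in T$ with $a^{n}(1-ax)=0$; whenever $a\in\sqrt{Q_{\lambda}}$ we have $a\in M$, whence $1-ax\notin M$ is a unit modulo $Q(M)$, forcing $a^{n}\in Q(M)$ and thus $a^{n}\in Q_{\lambda}$ uniformly in $\lambda$.

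For the converse, each $R/Q_{\lambda}$ is a primary ring whose zero-divisors are precisely the nilpotents, so the total quotient ring $T_{\lambda}$ is a zero-dimensional local ring with nil maximal ideal. By (A1) the diagonal map $R\hookrightarrow\prod_{\lambda}T_{\lambda}$ is injective. For $a\in R$, define $u_{a}\in\prod_{\lambda}T_{\lambda}$ coordinate-wise by $u_{a,\lambda}=\bar{a}^{-1}$ when $a\notin\sqrt{Q_{\lambda}}$ and $u_{a,\lambda}=0$ otherwise; condition (A2) supplies a single exponent $n(a)$ for which $a^{n(a)}=a^{n(a)+1}u_{a}$ holds in every component and hence in the product. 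I would then form $T$ as the direct limit of a chain $R=T^{(0)}\subseteq T^{(1)}\subseteq\cdots$ inside $\prod_{\lambda}T_{\lambda}$, setting $T^{(k+1)}=T^{(k)}[u_{b}:b\in T^{(k)}]$ with each $u_{b}$ defined analogously.

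The delicate point, and the main obstacle, is to show that this inductive construction really does produce valid pseudo-inverses $u_{b}$ with a uniform exponent at each stage. For $b\in T^{(k)}$, written as a polynomial expression in finitely many elements of $R$ and their previously adjoined pseudo-inverses, I would argue by induction on this polynomial structure that on the set of coordinates where $\bar{b}_{\lambda}$ fails to be a unit in $T_{\lambda}$, it is nilpotent of order bounded by a constant depending only on the expression; this uses that each $u_{a}$ is either a unit or zero, that sums and products of elements with uniformly bounded nilpotency indices still have uniformly bounded indices, and that in each local $T_{\lambda}$ an element is either a unit or lies in the nil maximal ideal. Once this uniform bound is available at every stage, every element of $T=\bigcup_{k}T^{(k)}$ is $\pi$-regular, so $T$ is the desired zero-dimensional overring of $R$.
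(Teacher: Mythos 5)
The paper itself does not prove this theorem; it is quoted verbatim from Arapovic [2, Theorem 7], so your attempt can only be measured against the standard argument in the literature, which it largely follows. Your forward direction is correct and complete: $Q(M)=\ker(T\rightarrow T_{M})$ is $M$-primary because $T_{M}$ is local with nil maximal ideal, the contractions $Q(M)\cap R$ give (A1), and $\pi$-regularity of the zero-dimensional ring $T$ (the equivalence the paper cites as [1, Theorem 6]) produces the single exponent needed for (A2), since $a^{n}(1-ax)=0$ with $1-ax\notin M$ forces $a^{n}\in Q(M)$. The converse also starts correctly: $T_{\lambda}$, the total quotient ring of $R/Q_{\lambda}$, is local zero-dimensional with nil maximal ideal, $R$ embeds in $\prod_{\lambda}T_{\lambda}$ by (A1), and (A2) gives $a^{n(a)}=a^{n(a)+1}u_{a}$ for $a\in R$.

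The genuine gap is exactly at the point you flag as delicate, and the tools you propose there do not close it. The closure lemma you invoke --- that sums and products of elements with uniformly bounded nilpotency indices again have uniformly bounded indices --- is true but inapplicable: at a coordinate $\lambda$ where $b_{\lambda}$ fails to be a unit, the individual terms of the polynomial expression for $b$ may all be \emph{units} of $T_{\lambda}$ (already for $b=a_{1}+a_{2}u_{a_{3}}\in T^{(1)}$ there are coordinates where $a_{1},a_{2},a_{3}$ are units but $b_{\lambda}$ is nilpotent), so no bound on the index of $b_{\lambda}$ can be assembled from bounds on the constituents; the only source of uniform bounds in this setup is (A2), and (A2) speaks only about elements of $R$. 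The missing device is the family of idempotents $e_{a}=au_{a}$ (coordinatewise $1$ or $0$). If $b\in R[u_{a_{1}},\dots,u_{a_{r}}]$, the $2^{r}$ orthogonal idempotents $e_{J}=\prod_{i\in J}e_{a_{i}}\prod_{i\notin J}(1-e_{a_{i}})$ split the product ring into pieces on each of which every $u_{a_{i}}$ is either $\bar{a}_{i}^{-1}$ or $0$, so that $be_{J}=c_{J}\bigl(\prod_{i\in J}a_{i}\bigr)^{-E_{J}}e_{J}$ for a single element $c_{J}\in R$ and a unit factor; applying (A2) to each $c_{J}$ yields the uniform bound $N=\max_{J}n(c_{J})$, and moreover $u_{b}e_{J}=u_{c_{J}}\bigl(\prod_{i\in J}a_{i}\bigr)^{E_{J}}e_{J}$ lies back in $R[u_{a}:a\in R]$. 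This both repairs your induction and shows your transfinite chain is unnecessary: $T^{(1)}=R[u_{a}:a\in R]$ is already $\pi$-regular, hence zero-dimensional. As written, without this idempotent decomposition or an equivalent reduction to (A2), your induction stalls at the very first adjunction step.
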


In the paper "Subrings of zero-dimensional rings", Jim Brewer and Fred Richman
give an equivalent condition for (\textit{A2}) and characterize the condition
[3, Theorem 2].

\begin{theorem}
\lbrack3, Theorem 2] A family $\left\{  Q_{\lambda}\right\}  _{\lambda
\in\Lambda}$ of ideals in a ring $R$ satisfies (A2) if and only if for each
(countable) subset $\Gamma\subseteq\Lambda$\newline%
\[
\sqrt{\underset{\lambda\in\Gamma}{%
{\displaystyle\bigcap}
}Q_{\lambda}}=\underset{\lambda\in\Gamma}{%
{\displaystyle\bigcap}
}\sqrt{Q_{\lambda}}\text{ .}%
\]

\end{theorem}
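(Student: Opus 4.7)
The statement is an if-and-only-if, with the trivial inclusion $\sqrt{\bigcap_{\lambda \in \Gamma} Q_\lambda} \subseteq \bigcap_{\lambda \in \Gamma} \sqrt{Q_\lambda}$ holding for any family of ideals whatsoever (if $a^n \in \bigcap Q_\lambda$ then $a \in \sqrt{Q_\lambda}$ for each $\lambda$). The plan is therefore to handle only the nontrivial inclusion in the ``only if'' direction and the full ``if'' direction, which is a contradiction argument invoking countability.

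For the forward implication, assume $(A2)$ and fix an arbitrary subset $\Gamma \subseteq \Lambda$. Take $a \in \bigcap_{\lambda \in \Gamma} \sqrt{Q_\lambda}$. Apply $(A2)$ to $a$ to obtain a single exponent $n \in \mathbb{N}$ with the property that $a^n \in Q_\lambda$ whenever $a \in \sqrt{Q_\lambda}$; since every $\lambda \in \Gamma$ satisfies this hypothesis, $a^n \in \bigcap_{\lambda \in \Gamma} Q_\lambda$, so $a \in \sqrt{\bigcap_{\lambda \in \Gamma} Q_\lambda}$. This gives the reverse inclusion and hence equality, with no cardinality restriction on $\Gamma$.

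For the converse, assume the radical-intersection identity for every countable $\Gamma$, and suppose toward contradiction that $(A2)$ fails for some $a \in R$. Then for every positive integer $n$ there must exist some index $\lambda_n \in \Lambda$ with $a \in \sqrt{Q_{\lambda_n}}$ but $a^n \notin Q_{\lambda_n}$; collect these into the countable set $\Gamma = \{\lambda_1, \lambda_2, \ldots\}$. By construction $a \in \bigcap_{n \geq 1} \sqrt{Q_{\lambda_n}}$, so the hypothesis gives $a \in \sqrt{\bigcap_{n \geq 1} Q_{\lambda_n}}$; hence $a^N \in \bigcap_{n \geq 1} Q_{\lambda_n}$ for some fixed $N$. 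In particular $a^N \in Q_{\lambda_N}$, contradicting the choice of $\lambda_N$.

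The only subtle point is the second step: one must resist the temptation to take $\Gamma = \{\lambda : a \in \sqrt{Q_\lambda}\}$, which can be arbitrarily large, and instead extract a \emph{countable} witness set indexed by the failing exponents. Once that reduction is in place, the rest is a direct unpacking of definitions, and no further structural hypotheses on the $Q_\lambda$ (such as primariness) are needed for this equivalence.
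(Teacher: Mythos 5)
Your proof is correct: the forward direction properly uses the uniform exponent $n$ supplied by (A2) to get $a^n\in\bigcap_{\lambda\in\Gamma}Q_\lambda$ (and, as you note, works for arbitrary $\Gamma$, which is exactly why the ``(countable)'' in the statement is parenthetical), while the converse correctly negates (A2) to extract a countable witness family $\{\lambda_n\}_{n\geq 1}$ with $a\in\sqrt{Q_{\lambda_n}}$ but $a^n\notin Q_{\lambda_n}$, and derives the contradiction $a^N\in Q_{\lambda_N}$. The paper itself gives no proof of this statement --- it is imported verbatim from Brewer and Richman [3, Theorem 2] --- and your argument is the standard one, essentially identical to the original, so there is nothing in the paper to diverge from.
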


\begin{theorem}
\lbrack3, Theorem 4] The following conditions on a ring $R$ are
equivalent:\newline(i) The ring $R$ is zero dimensional,\newline(ii) Condition
(A2) holds for the family of all ideals of $R$,\newline(iii) Condition (A2)
holds for the family of all primary ideals of $R$.
\end{theorem}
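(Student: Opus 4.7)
My plan is to prove the cycle (ii) $\Rightarrow$ (iii) $\Rightarrow$ (i) $\Rightarrow$ (ii). The implication (ii) $\Rightarrow$ (iii) is immediate since the primary ideals form a subfamily of all ideals. For (i) $\Rightarrow$ (ii) I would use the classical characterization that a commutative ring is zero-dimensional if and only if it is $\pi$-regular, meaning that for every $a\in R$ there exist $n=n(a)\in\mathbb{N}$ and $x\in R$ with $a^{n}=a^{n+1}x$. Given any family $\{I_{\alpha}\}$ of ideals and any $I_{\alpha}$ with $a\in\sqrt{I_{\alpha}}$, the class $\bar a$ is nilpotent in $R/I_{\alpha}$; iterating $\bar a^{n}=\bar a^{n+1}\bar x$ yields $\bar a^{n}=\bar a^{n+k}\bar x^{k}=0$ for large $k$, whence $a^{n}\in I_{\alpha}$. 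Since $n$ depends only on $a$, condition (A2) follows.

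For the key direction (iii) $\Rightarrow$ (i) I would argue by contrapositive. If $R$ is not zero-dimensional, choose a prime $P$ that is not maximal. The argument of the Lemma earlier in the paper adapts to show that (A2) for primary ideals descends to the quotient $R/P$: primary ideals of $R/P$ lift to primary ideals of $R$ containing $P$, and the uniform bound from $R$ serves in $R/P$. Hence we may assume $R$ itself is a domain that is not a field, and pick a nonzero non-unit $a\in R$. By Krull's theorem there is a prime $P'$ minimal over $(a)$. In the local ring $R_{P'}$ the ideal $(a^{m})R_{P'}$ has radical $P'R_{P'}$ by minimality of $P'$, so each $Q_{m}:=(a^{m})R_{P'}\cap R$ is $P'$-primary, with $a\in P'=\sqrt{Q_{m}}$. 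If $a^{n}\in Q_{m}$ with $m>n$, then $a^{n}=a^{m}(r/s)$ in $R_{P'}$ for some $s\notin P'$, which forces $s=r\,a^{m-n}\in P'$, a contradiction. Thus no uniform $n$ works for the family $\{Q_{m}\}$, violating (A2) and proving the contrapositive.

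The main obstacle is constructing, in the (iii) $\Rightarrow$ (i) step, a family of primary ideals with a common radical whose powers of a prescribed element escape arbitrarily far. The localization construction $Q_{m}=(a^{m})R_{P'}\cap R$ is the right tool, but it depends crucially on choosing $P'$ minimal over $(a)$ so that each $Q_{m}$ really is primary; the reduction to the domain case via a Lemma-style descent of (A2) to quotients is the other nontrivial ingredient, since without passing to $R/P$ there is no domain in which to run the fraction-field computation. Once these two moves are in hand, the contradiction is a direct calculation.
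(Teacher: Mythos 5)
Your proof is correct, and there is nothing internal to compare it against: the paper states this result verbatim as [3, Theorem 4] of Brewer and Richman and offers no proof, so your proposal in effect reconstructs the argument from the cited source. The cycle closes properly. The implication (ii) $\Rightarrow$ (iii) is indeed trivial restriction to a subfamily. For (i) $\Rightarrow$ (ii), the appeal to $\pi$-regularity is legitimate (the paper itself cites Arapovic's equivalence as [1, Theorem 6]), and the iteration $a^{n}=a^{n+1}x \Rightarrow a^{n}=a^{n+k}x^{k}$ correctly kills $\bar a^{n}$ once $\bar a$ is nilpotent modulo $I_{\alpha}$, with the exponent $n$ depending only on $a$ as (A2) requires. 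The delicate direction (iii) $\Rightarrow$ (i) is also sound in every detail: (A2) for primary ideals descends to $R/P$ because preimages of primary ideals under the quotient map are primary, contain $P$, and have radical equal to the preimage of the radical, so the bound for any chosen preimage $a$ of $\bar a$ serves for $\bar a$; and in a domain with nonzero nonunit $a$, your contractions $Q_{m}=(a^{m})R_{P'}\cap R$ are genuinely $P'$-primary, since minimality of $P'$ over $(a)$ forces $\sqrt{(a^{m})R_{P'}}=P'R_{P'}$, the maximal ideal of the local ring, and an ideal with maximal radical is primary, while contraction preserves primariness. The cancellation $sa^{n}=ra^{m}\Rightarrow s=ra^{m-n}\in P'$ is valid precisely because you have reduced to a domain, so $a^{n}\notin Q_{m}$ whenever $m>n$ and no uniform exponent can exist for the countable family $\{Q_{m}\}$, which is a subfamily of all primary ideals. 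One cosmetic nit: the existence of a prime minimal over $(a)$ is a Zorn's lemma fact valid in any commutative ring; calling it ``Krull's theorem'' risks suggesting the principal ideal theorem, which is a Noetherian statement and is neither needed nor available here.
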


With this Theorem we give the following Proposition, which gives us that a
ring satisfying $\ast$-property satisfies the condition (A2). And so we get
that every ring satisfying $\ast$-property is a zero-dimensional ring.

\begin{proposition}
Let $R$ be a ring satisfying $\ast$-property and $\left\{  Q_{\alpha}\right\}
_{\alpha\in S}$ be a family of ideals. Then $\left\{  Q_{\alpha}\right\}
_{\alpha\in S}$ satisfies the (A2) condition.
\end{proposition}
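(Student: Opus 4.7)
The plan is to apply the Brewer–Richman characterization (Theorem [3, Theorem 2]) which reduces condition (A2) to the identity
\[
\sqrt{\underset{\alpha\in\Gamma}{\bigcap} Q_{\alpha}} \;=\; \underset{\alpha\in\Gamma}{\bigcap}\sqrt{Q_{\alpha}}
\]
for every (countable) subset $\Gamma \subseteq S$. Hence it suffices to verify this identity using only the hypothesis that $R$ satisfies $\ast$-property.

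First I would fix an arbitrary subset $\Gamma \subseteq S$ and apply the $\ast$-property directly to the subfamily $\{Q_{\alpha}\}_{\alpha\in\Gamma}$. This produces a finite subset $\Gamma' \subseteq \Gamma$ with
\[
\sqrt{\underset{\alpha\in\Gamma}{\bigcap} Q_{\alpha}} \;=\; \underset{\alpha\in\Gamma'}{\bigcap}\sqrt{Q_{\alpha}}.
\]
Next I would use the two elementary inclusions that hold for any family: on the one hand, since intersecting over a larger index set gives a smaller ideal, and radical is inclusion-preserving, one has $\sqrt{\bigcap_{\alpha\in\Gamma} Q_{\alpha}} \subseteq \bigcap_{\alpha\in\Gamma}\sqrt{Q_{\alpha}}$; on the other hand, $\Gamma' \subseteq \Gamma$ gives $\bigcap_{\alpha\in\Gamma}\sqrt{Q_{\alpha}} \subseteq \bigcap_{\alpha\in\Gamma'}\sqrt{Q_{\alpha}}$. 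Substituting the equality from the $\ast$-property into the latter inclusion chains everything together and forces $\bigcap_{\alpha\in\Gamma}\sqrt{Q_{\alpha}} = \sqrt{\bigcap_{\alpha\in\Gamma} Q_{\alpha}}$.

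Finally, since $\Gamma$ was arbitrary, the identity holds in particular for every countable $\Gamma$, so Theorem [3, Theorem 2] yields that $\{Q_{\alpha}\}_{\alpha\in S}$ satisfies (A2). There is no genuine obstacle here: the $\ast$-property is strictly stronger than what (A2) demands, because it upgrades the ``finite subset'' from depending on each element $a$ to a single finite subset that works for the whole intersection, so the reduction is really just bookkeeping with the two trivial inclusions. The only point requiring a moment of care is to note that the $\ast$-property is assumed for \emph{every} family of ideals, which is exactly what allows us to apply it to the restricted family indexed by $\Gamma$ rather than by the original $S$.
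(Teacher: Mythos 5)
Your proposal is correct and follows essentially the same route as the paper: apply the $\ast$-property to an arbitrary (in particular countable) subfamily indexed by $\Gamma\subseteq S$ to obtain a finite $\Gamma'$, then sandwich $\bigcap_{\alpha\in\Gamma}\sqrt{Q_{\alpha}}$ between $\sqrt{\bigcap_{\alpha\in\Gamma}Q_{\alpha}}$ and $\bigcap_{\alpha\in\Gamma'}\sqrt{Q_{\alpha}}$ via the two trivial inclusions, and invoke the Brewer--Richman characterization. The paper's proof is exactly this sandwich argument, so there is nothing to add.
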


\begin{proof}
By assumption for any countable subset $T\subseteq S$,
\[
\sqrt{\underset{\alpha\in T}{%
{\displaystyle\bigcap}
}Q_{\alpha}}=\underset{\alpha\in T^{\prime}}{%
{\displaystyle\bigcap}
}\sqrt{Q_{\alpha}}%
\]
for some finite subset $T^{\prime}$ of $T$. Then
\[
\underset{\alpha\in T^{\prime}}{%
{\displaystyle\bigcap}
}\sqrt{Q_{\alpha}}=\sqrt{\underset{\alpha\in T}{%
{\displaystyle\bigcap}
}Q_{\alpha}}\subseteq\underset{\alpha\in T}{%
{\displaystyle\bigcap}
}\sqrt{Q_{\alpha}}\subseteq\underset{\alpha\in T^{\prime}}{%
{\displaystyle\bigcap}
}\sqrt{Q_{\alpha}}%
\]
and so we get
\[
\sqrt{\underset{\alpha\in T}{%
{\displaystyle\bigcap}
}Q_{\alpha}}=\underset{\alpha\in T}{%
{\displaystyle\bigcap}
}\sqrt{Q_{\alpha}}\text{.}%
\]
Thus by Theorem 4 we get the result.
\end{proof}

\begin{corollary}
Every ring which satisfies $\ast$-property is zero-dimensional.
\end{corollary}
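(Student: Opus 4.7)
The plan is to obtain this corollary as an immediate consequence of the two results that directly precede it in the section. Specifically, the preceding Proposition establishes that if $R$ satisfies $\ast$-property, then any family of ideals of $R$ fulfills the Arapovic condition (A2), while Theorem [3, Theorem 4] tells us that a ring is zero-dimensional in the sense of Krull dimension precisely when (A2) holds for the family of all of its ideals.

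First I would apply the preceding Proposition with the index family $\{Q_\alpha\}_{\alpha \in S}$ taken to be the family of all ideals of $R$. This specialization is legitimate since that Proposition is stated for arbitrary families. Its conclusion is that this family satisfies (A2), which is exactly clause (ii) of Theorem [3, Theorem 4]. Then I would invoke the equivalence (i) $\Leftrightarrow$ (ii) of that theorem to conclude that $R$ is zero-dimensional.

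There is essentially no obstacle here: the technical content has already been carried out in proving the preceding Proposition (where the finite subset $T'$ is squeezed into the full intersection), and the present corollary is a purely formal assembly of that Proposition with the cited theorem of Brewer and Richman. As a consistency check, the conclusion agrees with Theorem 2 of this section, in which an integral domain satisfying $\ast$-property was shown to be a field; since fields are trivially zero-dimensional, the present Corollary generalizes that more specific statement to arbitrary commutative rings with identity.
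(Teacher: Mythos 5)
Your proposal is correct and is exactly the paper's argument: the paper proves this corollary by citing the preceding Proposition (applied to the family of all ideals, yielding condition (A2)) together with the Brewer--Richman equivalence (i)$\Leftrightarrow$(ii) of [3, Theorem 4]. You have simply spelled out the same two-step assembly in more detail.
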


\begin{proof}
It follows from Proposition 2 and Theorem 5.
\end{proof}

Here we recall that a commutative ring with identity is called von Neumann
regular ring (VNR) if for all $x\in R$ there exists $y\in R$ such that
$x^{2}y=x$. Also we give the following characterization of VNR which was given
by R.Gilmer in [4].

\begin{theorem}
\lbrack4, Theorem 3.1] The following conditions are equivalent in a ring
$R$:\newline(i) $R$ is VNR,\newline(ii) $R$ is zero-dimensional and
reduced,\newline(iii) $R_{P}$ is a field for each $P\in Spec(R)$,\newline(iv)
Each ideal of $R$ is a radical ideal,\newline(v) Each ideal of $R$ is idempotent.
\end{theorem}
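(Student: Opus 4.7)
The plan is to establish the five-way equivalence via two linked cycles. I would prove (i) $\Rightarrow$ (v) $\Rightarrow$ (iv) $\Rightarrow$ (i) to handle the three purely ideal-theoretic conditions, and (i) $\Rightarrow$ (ii) $\Rightarrow$ (iii) $\Rightarrow$ (i) to bring the localization and dimension conditions into the loop.

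For the first cycle, (i) $\Rightarrow$ (v) is immediate: given any ideal $I$ and $x\in I$, choose $y$ with $x=x^{2}y$; then $x=x\cdot(xy)\in I\cdot I=I^{2}$, so $I\subseteq I^{2}$. For (v) $\Rightarrow$ (iv), take an ideal $I$ and $x\in\sqrt{I}$ with $x^{n}\in I$; iterating $(x)=(x)^{2}=(x^{2})$ gives $(x)=(x^{2^{k}})$ for all $k$, so choosing $2^{k}\geq n$ yields $x\in(x^{n})\subseteq I$, hence $I=\sqrt{I}$. For (iv) $\Rightarrow$ (i), fix $x\in R$; the ideal $(x^{2})$ is radical by hypothesis, and since $x\in\sqrt{(x^{2})}=(x^{2})$ we get $x=rx^{2}$ for some $r\in R$.

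For the second cycle, (i) $\Rightarrow$ (ii) splits into two pieces: $R$ is reduced because $x^{2}=0$ forces $x=x^{2}y=0$, and $R$ is zero-dimensional because if $P\subsetneq Q$ were distinct primes with $x\in Q\setminus P$, picking $y$ with $x=x^{2}y$ gives $x(1-xy)=0\in P$, so $1-xy\in P\subseteq Q$, which together with $xy\in Q$ forces $1\in Q$, a contradiction. For (ii) $\Rightarrow$ (iii), fix $P\in\mathrm{Spec}(R)$; zero-dimensionality collapses $R_{P}$ to a local ring whose maximal ideal equals its nilradical, and reducedness forces that nilradical to be zero, so $R_{P}$ is a field. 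For (iii) $\Rightarrow$ (i), fix $x\in R$ and examine the module $M=(x)/(x^{2})$: at each prime $P$ the ring $R_{P}$ is a field, so $(x)_{P}$ is either $0$ or all of $R_{P}$, and in either case coincides with $(x^{2})_{P}$, so $M_{P}=0$; since $M$ vanishes at every prime, $M=0$, i.e.\ $x\in(x^{2})$ and so $x=rx^{2}$.

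The most delicate step will be (iii) $\Rightarrow$ (i), since it requires reducing the global assertion $x\in(x^{2})$ to a prime-by-prime localization argument and then invoking the local-global principle that a module whose localization at every prime is zero must itself be zero. The step (ii) $\Rightarrow$ (iii) is the other subtle point, resting on the small but crucial observation that a zero-dimensional reduced local ring has no nonzero elements in its maximal ideal. All remaining implications are formal manipulations with the VNR identity $x=x^{2}y$ and the definition of the radical.
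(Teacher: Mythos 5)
The paper never proves this theorem: it is imported verbatim as [4, Theorem 3.1] from Gilmer and used only as a quoted tool (to derive the corollary that a reduced ring with the $\ast$-property is VNR), so there is no in-paper argument to compare yours against --- what you have written is a self-contained proof of the cited result, and it is correct. Checking it on its own merits: in the first cycle, (i) $\Rightarrow$ (v) via $x = x\cdot(xy)$ with $x, xy \in I$ is sound; (v) $\Rightarrow$ (iv) works because $(x)=(x)^{2}=(x^{2})$ iterates to $(x)=(x^{2^{k}})$ and $(x^{2^{k}})\subseteq(x^{n})$ once $2^{k}\geq n$; (iv) $\Rightarrow$ (i) applied to the radical ideal $(x^{2})$ is the standard trick. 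In the second cycle, (i) $\Rightarrow$ (ii) correctly gets both reducedness and the impossibility of a strict chain $P\subsetneq Q$ from $x(1-xy)=0$; in (ii) $\Rightarrow$ (iii) you silently use two standard facts worth making explicit, namely that the primes of $R_{P}$ correspond to the primes of $R$ contained in $P$ (so zero-dimensionality forces $PR_{P}$ to be the unique prime of $R_{P}$, hence its nilradical) and that a localization of a reduced ring is reduced; and in (iii) $\Rightarrow$ (i) the local-global principle you invoke ($M_{P}=0$ for all primes implies $M=0$) holds for arbitrary modules --- and your $M=(x)/(x^{2})$ is in any case cyclic --- so the reduction is legitimate. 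Since the two cycles share the vertex (i), all five conditions are indeed equivalent; the only improvements to suggest are to state the two localization facts explicitly in (ii) $\Rightarrow$ (iii).
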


\begin{corollary}
Let $R$ be a reduced ring. If $R$ is satisfying $\ast$-property then $R$ is VNR.
\end{corollary}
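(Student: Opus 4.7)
The plan is to combine two results already established in the excerpt: the fact that any ring with the $\ast$-property is zero-dimensional (Corollary 2, proved via Proposition 2 and Theorem 5), and Gilmer's characterization of von Neumann regular rings as precisely the rings that are both reduced and zero-dimensional (Theorem 6, equivalence of (i) and (ii)).

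Concretely, I would proceed in two short steps. First, invoke Corollary 2 on $R$ to conclude that $R$ is zero-dimensional. Second, invoke the hypothesis that $R$ is reduced, and then apply the implication (ii) $\Rightarrow$ (i) of Theorem 6 to deduce that $R$ is VNR. No further computation is needed, and there is no genuine obstacle here since all the work has been done in the preceding lemmas and in the cited result of Gilmer; the corollary is essentially a packaging of Corollary 2 together with Theorem 6.

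If one wanted to avoid citing Theorem 6 directly, an alternative would be to take $x \in R$ and observe that because $R$ is zero-dimensional every prime ideal is maximal, so $R_{P}$ has dimension zero for every $P \in \mathrm{Spec}(R)$; combined with reducedness, each $R_{P}$ is a field, which gives condition (iii) of Theorem 6 and hence VNR. But since Theorem 6 is already stated in the paper, the shortest route is simply to apply it.
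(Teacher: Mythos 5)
Your proof is correct and takes exactly the same route as the paper, which deduces the corollary by combining the fact that a ring satisfying the $\ast$-property is zero-dimensional with Gilmer's characterization (Theorem 6) of VNR rings as the reduced zero-dimensional ones. The only quibble is a numbering slip: the zero-dimensionality result is Corollary 3 in the paper, not Corollary 2.
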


\begin{proof}
It follows from Corollory 3 and Theorem 6.
\end{proof}

\begin{lemma}
Let $R$ be a ring satisfying $\ast$-property. Consider the following
conditions:\newline(i) $R$ is a VNR,\newline(ii) If $\sqrt{I}=\sqrt{J}$ for
some ideals $I$, $J$ of $R$ then $I=J$.\newline If one of the above conditions
holds, then $R$ is an Artinian ring.
\end{lemma}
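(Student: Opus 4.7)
The plan is to show that condition (ii) secretly forces condition (i), and then to extract Artinianness from (i) together with the $\ast$-property. Suppose (ii) holds. For any ideal $I$ of $R$, set $J=\sqrt{I}$; then $\sqrt{J}=\sqrt{\sqrt{I}}=\sqrt{I}$, so $\sqrt{I}=\sqrt{J}$, and condition (ii) yields $I=J=\sqrt{I}$. Thus every ideal of $R$ is radical, and Theorem 6 (iv) identifies $R$ as a VNR. In either case I may therefore assume (i).

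Assuming (i), Theorem 6 (iv) again says every ideal of $R$ equals its own radical. Consequently the $\ast$-property simplifies dramatically: for every family $\{I_\alpha\}_{\alpha\in S}$ of ideals of $R$ there is a finite subset $S'\subseteq S$ with
\[
\bigcap_{\alpha\in S} I_\alpha = \bigcap_{\alpha\in S'} I_\alpha,
\]
since both the outer radical and the individual radicals on the right-hand side can be dropped.

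To conclude, I would apply this simplified form directly to an arbitrary descending chain $I_1\supseteq I_2\supseteq\cdots$ of ideals, taking $S=\mathbb{N}$ and using the family $\{I_n\}_{n\in\mathbb{N}}$. The intersection $\bigcap_{n\in\mathbb{N}} I_n$ then equals a finite subintersection $\bigcap_{i=1}^{k} I_{n_i}$; because the chain is descending, this coincides with $I_N$ where $N=\max\{n_1,\ldots,n_k\}$. Hence $I_n=I_N$ for all $n\ge N$, the chain stabilizes, and $R$ satisfies the d.c.c.\ on ideals, i.e., $R$ is Artinian.

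I do not foresee a substantial obstacle. The whole argument rests on the observation that in a VNR radicals become transparent inside the $\ast$-property, turning it into a direct DCC statement for arbitrary ideals. The only mildly subtle step is the implication (ii)$\Rightarrow$(i), which uses only the identity $\sqrt{\sqrt{I}}=\sqrt{I}$ in combination with hypothesis (ii).
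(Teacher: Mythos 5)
Your proof is correct, and it organizes the two hypotheses differently from the paper. The paper runs a single argument covering both conditions at once: it applies the $\ast$-property to the descending chain to obtain $\sqrt{\bigcap_{\alpha\in S} I_{\alpha}}=\bigcap_{i=1}^{n}\sqrt{I_{\alpha_i}}=\sqrt{I_{\alpha_n}}$ (using that a finite intersection of radicals is the radical of the intersection, and that the chain is descending), and only at the last step invokes whichever hypothesis holds: under (ii) the equality of radicals directly forces $\bigcap_{\alpha} I_{\alpha}=I_{\alpha_n}$, while under (i) every ideal is radical so the radicals disappear. You instead preprocess: your observation that (ii) makes every ideal radical (take $J=\sqrt{I}$), combined with Theorem 6(iv), reduces (ii) to (i); and since a VNR conversely satisfies (ii) trivially (equal radicals are equal ideals when $I=\sqrt{I}$ always), your argument in fact shows conditions (i) and (ii) are equivalent in any ring --- a point the paper never makes explicit. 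After that reduction the engine is the same as the paper's: apply the $\ast$-property to the chain and conclude stabilization. Your route has the virtue of transparency --- in a VNR the $\ast$-property collapses to the literal statement that any intersection of ideals equals a finite subintersection (note you implicitly use that an arbitrary intersection of radical ideals is radical, which justifies dropping the outer radical; this is standard but worth stating), from which d.c.c.\ is immediate. The paper's route has the complementary virtue of handling case (ii) without appealing to Theorem 6 at all. Both arguments are complete.
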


\begin{proof}
Let $R$ be a ring satisfying $\ast$-property and%
\[
I_{1}\supseteq I_{2}\supseteq I_{3}\supseteq...
\]
be a descending chain of ideals in $R$. Then we get $\sqrt{\underset{\alpha\in
S}{%
{\displaystyle\bigcap}
}I_{\alpha}}=\underset{i=1}{\overset{n}{%
{\displaystyle\bigcap}
}}\sqrt{I_{\alpha_{i}}}$ for some finite subset $S^{\prime}=\left\{
\alpha_{1},...,\alpha_{n}\right\}  $ of $S$, where $I_{\alpha_{1}}%
\supseteq...\supseteq I_{\alpha_{n}}$. And so%
\[
\sqrt{\underset{\alpha\in S}{%
{\displaystyle\bigcap}
}I_{\alpha}}=\underset{i=1}{\overset{n}{%
{\displaystyle\bigcap}
}}\sqrt{I_{\alpha_{i}}}=\sqrt{\underset{i=1}{\overset{n}{%
{\displaystyle\bigcap}
}}I_{\alpha_{i}}}=\sqrt{I_{\alpha_{n}}}\text{.}%
\]
Thus by our hypothesis we get $\underset{\alpha\in S}{%
{\displaystyle\bigcap}
}I_{\alpha}=I_{\alpha_{n}}$. The chain must stop.
\end{proof}

Recall that a commutative ring with identity $R$ is called Laskerian if each
ideal of $R$ is a finite intersection of primary ideals.

\begin{theorem}
Let $R$ be a commutative ring with identity. If $R$ is a Laskerian ring then
the following conditions are equivalent:\newline(i) $R$ is a ring satisfying
$\ast$-property,\newline(ii) For the family $\left\{  P_{\lambda}\right\}
_{\lambda\in\Lambda}$ of prime ideals of $R$ we get$\sqrt{\underset{\lambda
\in\Lambda}{%
{\displaystyle\bigcap}
}P_{\lambda}}=\underset{\lambda\in\Gamma}{%
{\displaystyle\bigcap}
}P_{\lambda}$ for a finite subset $\Gamma$ of $\Lambda$ .
\end{theorem}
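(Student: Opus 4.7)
The plan is to split into the two implications, with (i) $\Rightarrow$ (ii) being essentially immediate and (ii) $\Rightarrow$ (i) carrying all the content.

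For (i) $\Rightarrow$ (ii), I would apply the $\ast$-property directly to the family $\{P_\lambda\}_{\lambda \in \Lambda}$ of prime ideals to obtain a finite $\Gamma \subseteq \Lambda$ with $\sqrt{\bigcap_{\lambda \in \Lambda} P_\lambda} = \bigcap_{\lambda \in \Gamma} \sqrt{P_\lambda}$, and then observe that $\sqrt{P_\lambda} = P_\lambda$ because primes are radical.

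For (ii) $\Rightarrow$ (i), the Laskerian assumption is what lets us transfer prime-level information to arbitrary ideals. Given $\{I_\alpha\}_{\alpha \in S}$, write each $I_\alpha = Q_{\alpha,1} \cap \cdots \cap Q_{\alpha, n_\alpha}$ as a finite primary decomposition, so $\sqrt{I_\alpha} = P_{\alpha,1} \cap \cdots \cap P_{\alpha, n_\alpha}$ with $P_{\alpha, j} := \sqrt{Q_{\alpha, j}}$ prime. The collection $\mathcal{P} := \{P_{\alpha, j} : \alpha \in S,\ 1 \le j \le n_\alpha\}$ is a (generally infinite) family of primes, and (ii) applied to $\mathcal{P}$ produces a finite subfamily $\mathcal{P}' = \{P_{\alpha_k, j_k}\}_{k=1}^m$ with $\bigcap \mathcal{P} = \bigcap \mathcal{P}'$ (the intersection of primes being already radical). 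Setting $S' := \{\alpha_1, \ldots, \alpha_m\}$, the sandwich
\[
\bigcap_{\alpha \in S} \sqrt{I_\alpha} \;\subseteq\; \bigcap_{\alpha \in S'} \sqrt{I_\alpha} \;\subseteq\; \bigcap_{k=1}^m P_{\alpha_k, j_k} \;=\; \bigcap \mathcal{P} \;=\; \bigcap_{\alpha \in S} \sqrt{I_\alpha}
\]
collapses to equalities, giving $\bigcap_{\alpha \in S'} \sqrt{I_\alpha} = \bigcap_{\alpha \in S} \sqrt{I_\alpha}$, and since $S'$ is finite this also equals $\sqrt{\bigcap_{\alpha \in S'} I_\alpha}$.

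The final step, and the main obstacle, is upgrading this to the $\ast$-property identity $\sqrt{\bigcap_{\alpha \in S} I_\alpha} = \bigcap_{\alpha \in S'} \sqrt{I_\alpha}$. The inclusion $\sqrt{\bigcap_S I_\alpha} \subseteq \sqrt{\bigcap_{S'} I_\alpha} = \bigcap_{S'} \sqrt{I_\alpha}$ is automatic from $S' \subseteq S$. The reverse containment is the delicate one; it is equivalent to $\bigcap_{\alpha \in S} \sqrt{I_\alpha} \subseteq \sqrt{\bigcap_{\alpha \in S} I_\alpha}$, which is precisely condition (A2) of Arapovic for the family $\{I_\alpha\}$. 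To establish (A2) I would invoke Theorem 4 (Brewer--Richman), which characterizes (A2) by exactly the radical-of-intersection equality for countable subfamilies, applied to the primes $\mathcal{P}$ produced above. The heart of the proof is thus the transfer from ``finite stabilization of the intersection of primes'' supplied by (ii) to ``uniform nilpotency of ideal intersections'' required by (A2); this is what the Laskerian hypothesis is needed for, since it reduces the radicals of ideals to finite prime data, and it is where I would expect the most technical care to be required.
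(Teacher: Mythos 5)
Your first half is fine: your (i)$\Rightarrow$(ii) is the same one-line argument the paper gives, and your use of primary decompositions to obtain $\bigcap_{\alpha\in S'}\sqrt{I_{\alpha}}=\bigcap_{\alpha\in S}\sqrt{I_{\alpha}}$ from (ii) is exactly the paper's reduction, carried out more carefully than in the paper itself. But the step you defer, namely $\bigcap_{\alpha\in S}\sqrt{I_{\alpha}}\subseteq\sqrt{\bigcap_{\alpha\in S}I_{\alpha}}$ (condition (A2) for the family $\{I_{\alpha}\}$), is a genuine gap, and the tool you propose cannot close it. Applying Theorem 4 to the prime family $\mathcal{P}$ is vacuous: an intersection of prime ideals is already a radical ideal, so $\sqrt{\bigcap_{\Gamma}P}=\bigcap_{\Gamma}P=\bigcap_{\Gamma}\sqrt{P}$ holds for every subfamily of every family of primes in every ring. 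Hence (A2) for $\mathcal{P}$ is automatic and carries no information about (A2) for $\{I_{\alpha}\}$, where the point is that the exponent witnessing $x^{n}\in I_{\alpha}$ must be uniform in $\alpha$.

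Worse, no amount of technical care can supply the missing step, because the implication (ii)$\Rightarrow$(i) is false as stated. Let $R=k[[t]]$ be a discrete valuation ring with maximal ideal $\mathfrak{m}=(t)$: it is Noetherian, hence Laskerian, and its only primes are $(0)$ and $\mathfrak{m}$, so any family of prime ideals consists (as a set) of at most two ideals and (ii) holds trivially. Yet for the family $I_{n}=(t^{n})$, $n\in\mathbb{N}$, one has $\sqrt{\bigcap_{n}I_{n}}=\sqrt{(0)}=(0)$, while $\bigcap_{n\in S'}\sqrt{I_{n}}=\mathfrak{m}$ for every finite nonempty $S'$; so $R$ fails the $\ast$-property (consistent with Theorem 2, which forces a domain with the $\ast$-property to be a field, and with Corollary 3, which forces zero-dimensionality). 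You should also know that the paper's own proof breaks at precisely the point you flagged: it asserts $\bigcap_{\mathrm{finite}}\sqrt{I_{\alpha}}=\sqrt{\bigcap_{\alpha,i}P_{\alpha,i}}\subseteq\sqrt{\bigcap_{\alpha}I_{\alpha}}$, but since $\bigcap_{\alpha}I_{\alpha}\subseteq\bigcap_{\alpha}\sqrt{I_{\alpha}}=\bigcap_{\alpha,i}P_{\alpha,i}$, taking radicals yields the reverse inclusion; in the example above the paper's claimed inclusion reads $\mathfrak{m}\subseteq(0)$. So your instinct that this transfer is ``the heart of the proof'' was exactly right: it is the unproved --- and in fact unprovable --- step, and the theorem could only be salvaged by strengthening (ii), e.g.\ to a stabilization condition on families of primary (not merely prime) ideals with a uniform-exponent requirement.
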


\begin{proof}
(i)$\Rightarrow$(ii) It is clear.\newline(ii)$\Rightarrow$(i) Suppose that
$\sqrt{\underset{\lambda\in\Lambda}{%
{\displaystyle\bigcap}
}P_{\lambda}}=\underset{\lambda\in\Gamma}{%
{\displaystyle\bigcap}
}P_{\lambda} $ holds for a finite subset $\Gamma$ of $\Lambda$. Let $\left\{
I_{\alpha}\right\}  _{\alpha\in\Delta}$ be a family of ideals of $R$. Since
$R$ is Laskerian, we have $\sqrt{I_{\alpha}}=\underset{i=1}{\overset{n}{%
{\displaystyle\bigcap}
}}P_{\alpha,i}$. Thus we get $\sqrt{\underset{\alpha,i}{%
{\displaystyle\bigcap}
}P_{\alpha,i}}=\underset{finite}{%
{\displaystyle\bigcap}
}\sqrt{P_{\alpha,i}}=\underset{finite}{%
{\displaystyle\bigcap}
}P_{\alpha,i}$ and so, $\underset{finite}{%
{\displaystyle\bigcap}
}\sqrt{I_{\alpha}}=\underset{finite}{%
{\displaystyle\bigcap}
}P_{\alpha,i}=\underset{finite}{%
{\displaystyle\bigcap}
}\sqrt{P_{\alpha,i}}=\sqrt{\underset{\alpha,i}{%
{\displaystyle\bigcap}
}P_{\alpha}}\subseteq\sqrt{\underset{\alpha}{%
{\displaystyle\bigcap}
}I_{\alpha}}$. Since the converse inclusion holds always, we get
$\sqrt{\underset{\alpha}{%
{\displaystyle\bigcap}
}I_{\alpha}}=\underset{finite}{%
{\displaystyle\bigcap}
}\sqrt{I_{\alpha}}$. Hence $R$ satisfies $\ast$-property.
\end{proof}

\begin{corollary}
Let $R$ be a Laskerian ring which satisfies $\ast$-property and $S$ be a
multiplicatively closed subset of $R$. Then $S^{-1}R$ satisfies $\ast$-property.
\end{corollary}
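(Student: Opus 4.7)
The plan is to apply Theorem 7 to $S^{-1}R$, so I first need two ingredients: that $S^{-1}R$ is Laskerian, and that the prime-family condition (ii) of Theorem 7 holds in $S^{-1}R$.

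For the first ingredient, I would appeal to the standard fact that a localization of a Laskerian ring at a multiplicatively closed set is again Laskerian: primary ideals of $S^{-1}R$ are exactly $S^{-1}Q$ for primary ideals $Q$ of $R$ disjoint from $S$, and a finite primary decomposition of $I$ in $R$ localizes to a finite primary decomposition of $S^{-1}I$ in $S^{-1}R$ (after discarding components meeting $S$). This gives Laskerianness of $S^{-1}R$ essentially for free.

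For the second ingredient, let $\{Q_{\mu}\}_{\mu\in M}$ be an arbitrary family of prime ideals of $S^{-1}R$. By the usual correspondence, each $Q_{\mu}=S^{-1}P_{\mu}$ for a unique prime ideal $P_{\mu}$ of $R$ with $P_{\mu}\cap S=\emptyset$. Since $R$ is Laskerian and satisfies $\ast$-property, Theorem 7(ii) applied to the family $\{P_{\mu}\}_{\mu\in M}$ in $R$ yields a finite subset $\Gamma\subseteq M$ with $\sqrt{\bigcap_{\mu\in M}P_{\mu}}=\bigcap_{\mu\in\Gamma}P_{\mu}$. Localizing this finite intersection commutes with $S^{-1}$, so $\bigcap_{\mu\in\Gamma}Q_{\mu}=S^{-1}\bigl(\bigcap_{\mu\in\Gamma}P_{\mu}\bigr)=S^{-1}\sqrt{\bigcap_{\mu\in M}P_{\mu}}=\sqrt{S^{-1}\bigcap_{\mu\in M}P_{\mu}}$, using that localization commutes with radicals.

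Now I combine the two inclusions. From $S^{-1}\bigcap_{\mu\in M}P_{\mu}\subseteq\bigcap_{\mu\in M}S^{-1}P_{\mu}=\bigcap_{\mu\in M}Q_{\mu}$ I obtain $\bigcap_{\mu\in\Gamma}Q_{\mu}\subseteq\sqrt{\bigcap_{\mu\in M}Q_{\mu}}$. Conversely, since each $Q_{\mu}$ is prime (hence radical), $\sqrt{\bigcap_{\mu\in M}Q_{\mu}}\subseteq\bigcap_{\mu\in M}Q_{\mu}\subseteq\bigcap_{\mu\in\Gamma}Q_{\mu}$. Thus $\sqrt{\bigcap_{\mu\in M}Q_{\mu}}=\bigcap_{\mu\in\Gamma}Q_{\mu}$, which is condition (ii) of Theorem 7 for $S^{-1}R$. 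Together with Laskerianness, Theorem 7 gives that $S^{-1}R$ satisfies $\ast$-property.

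The main obstacle is the well-known failure of localization to commute with \emph{arbitrary} intersections of ideals; I sidestep it by squeezing $\sqrt{\bigcap Q_{\mu}}$ between the arbitrary intersection and the finite intersection $\bigcap_{\mu\in\Gamma}Q_{\mu}$, which is possible precisely because the $Q_{\mu}$ are prime.
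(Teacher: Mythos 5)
Your proof is correct, and its overall skeleton matches the paper's: both arguments transfer a family of primes of $S^{-1}R$ to primes $P_{\mu}$ of $R$ disjoint from $S$, apply the $\ast$-property of $R$ to that contracted family to extract a finite subset $\Gamma$, and push the resulting identity back up to $S^{-1}R$, concluding via Theorem 7. Where you genuinely diverge is at the step where localization meets the arbitrary intersection. The paper asserts $\bigcap\left(P_{\alpha}\right)^{e}=\left(\bigcap P_{\alpha}\right)^{e}$ for the whole (possibly infinite) family, justified only by the remark ``since $P_{\alpha}$ are prime''; this is in fact true, because a prime disjoint from $S$ is $S$-saturated (if $tx\in P_{\alpha}$ with $t\in S$ then $t\notin P_{\alpha}$ forces $x\in P_{\alpha}$, so $x/s\in S^{-1}P_{\alpha}$ if and only if $x\in P_{\alpha}$), but the paper leaves that verification entirely to the reader. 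You avoid needing this commutation at all: you use only the trivial inclusion $S^{-1}\bigcap_{\mu\in M}P_{\mu}\subseteq\bigcap_{\mu\in M}Q_{\mu}$, commutation of $S^{-1}$ with the \emph{finite} intersection over $\Gamma$, and the fact that primes are radical, squeezing $\sqrt{\bigcap_{\mu\in M}Q_{\mu}}$ between $\bigcap_{\mu\in\Gamma}Q_{\mu}$ from both sides. This buys robustness, since your argument is insensitive to whether extension commutes with the infinite intersection. A second point in your favor: to invoke the (ii)$\Rightarrow$(i) direction of Theorem 7 for $S^{-1}R$ one needs $S^{-1}R$ to be Laskerian, which you state and justify explicitly via the behavior of primary decompositions under localization; the paper's proof uses this tacitly when it concludes ``Thus $S^{-1}R$ satisfies $\ast$-property.'' So your write-up is not only correct but fills two small expository gaps in the original proof.
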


\begin{proof}
Let $\left\{  \mathfrak{P}_{\alpha}\right\}  $ be a family of prime ideals of
$S^{-1}R$. Then there exists a family of prime ideals $\left\{  P_{\alpha
}\right\}  $ of $R$\ such that $\left(  P_{\alpha}\right)  ^{e}=\mathfrak{P}%
_{\alpha}$. And we have $%
{\displaystyle\bigcap}
\left(  P_{\alpha}\right)  ^{e}=\left(
{\displaystyle\bigcap}
P_{\alpha}\right)  ^{e}$ since $P_{\alpha}$ are prime ideals of $R$. Thus we
get%
\begin{align*}
\sqrt{\underset{\lambda\in\Lambda}{%
{\displaystyle\bigcap}
}\mathfrak{P}_{\lambda}}  &  =\sqrt{\underset{\lambda\in\Lambda}{%
{\displaystyle\bigcap}
}\left(  P_{\lambda}\right)  ^{e}}=\sqrt{\left(  \underset{\lambda\in\Lambda}{%
{\displaystyle\bigcap}
}P_{\lambda}\right)  ^{e}}=\left(  \sqrt{\underset{\lambda\in\Lambda}{%
{\displaystyle\bigcap}
}P_{\lambda}}\right)  ^{e}\\
&  =\left(  \underset{\lambda\in\Gamma}{%
{\displaystyle\bigcap}
}\sqrt{P_{\lambda}}\right)  ^{e}=\left(  \underset{\lambda\in\Gamma}{%
{\displaystyle\bigcap}
}P_{\lambda}\right)  ^{e}=\underset{\lambda\in\Gamma}{%
{\displaystyle\bigcap}
}\left(  P_{\lambda}\right)  ^{e}\\
&  =\underset{\lambda\in\Gamma}{%
{\displaystyle\bigcap}
}\mathfrak{P}_{\lambda}=\underset{\lambda\in\Gamma}{%
{\displaystyle\bigcap}
}\sqrt{\mathfrak{P}_{\lambda}}%
\end{align*}
\newline for a finite subset $\Gamma$ of $\Lambda$ . Thus $S^{-1}R$ satisfies
$\ast$-property.
\end{proof}

Now, if we let a Noetherian ring instead of Laskerian ring then we can give
the following theorem from the previous results. Before the main theorem we
will recall that a ring $R$ is called $\pi$-regular provided for all $a\in R $
there exist $a^{\prime}\in R$, $n\in%
\mathbb{N}
^{\ast}$ such that $a^{n}=\left(  a^{n}\right)  ^{2}a^{\prime}$ . In his paper
"Characterization of the 0-dimensional rings", Arapovic has show that a ring
is $\pi$-regular if and only if the ring is zero dimensional [1, Theorem 6].

\begin{theorem}
Let $R$ be a Noetherian ring. The following conditions are equivalent:\newline%
(i) $R$ satisfies $\ast$-property,\newline(ii) $R$ is
zero-dimensional,\newline(iii) $R$ is Artinian,\newline(iv) $R$ is $\pi
$-regular,\newline(v) For the family $\left\{  P_{\lambda}\right\}
_{\lambda\in\Lambda}$ of prime ideals of $R$ we get $\sqrt{\underset
{\lambda\in\Lambda}{%
{\displaystyle\bigcap}
}P_{\lambda}}=\underset{\lambda\in\Gamma}{%
{\displaystyle\bigcap}
}P_{\lambda}$ for a finite subset $\Gamma$ of $\Lambda$ .
\end{theorem}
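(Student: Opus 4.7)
The plan is to close a cycle of implications among (i)--(v), drawing on the results already established in the paper together with two standard facts about Noetherian rings. Specifically, I will use: (a) every Noetherian ring is Laskerian (so Theorem 7 is applicable), (b) a Noetherian ring of Krull dimension zero is Artinian (the classical Akizuki--Hopkins-type theorem, proved e.g.\ via the fact that in a Noetherian zero-dimensional ring there are only finitely many maximal ideals and the nilradical is nilpotent), and (c) Arapovic's theorem that $\pi$-regular is equivalent to zero-dimensional, which is recalled in the paragraph preceding the theorem.

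First I would dispose of the equivalences that do not use the Noetherian hypothesis. The implication (iii)$\Rightarrow$(i) is Corollary~1, and (i)$\Rightarrow$(ii) is Corollary~3; these hold for arbitrary commutative rings. The equivalence (ii)$\Leftrightarrow$(iv) is Arapovic's characterization of zero-dimensional rings via $\pi$-regularity, cited in the paragraph immediately preceding the statement. So far no Noetherian hypothesis is needed.

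Next, for (ii)$\Rightarrow$(iii) I would invoke the Noetherian hypothesis: in a Noetherian ring $R$ with $\dim R=0$, every prime ideal is maximal, there are only finitely many of them (each minimal prime is maximal, and a Noetherian ring has finitely many minimal primes), and the nilradical $\operatorname{nil}(R)=\bigcap P_i$ is a finitely generated ideal, hence nilpotent. A standard filtration argument using $\operatorname{nil}(R)^k/\operatorname{nil}(R)^{k+1}$ as modules over the Artinian ring $R/\operatorname{nil}(R)$ then gives that $R$ is Artinian. This closes the cycle (i)$\Rightarrow$(ii)$\Rightarrow$(iii)$\Rightarrow$(i).

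Finally, for (i)$\Leftrightarrow$(v), I would note that a Noetherian ring is Laskerian (every ideal admits a primary decomposition), so Theorem~7 applies directly and gives the equivalence of (i) with the condition on families of prime ideals, which is exactly (v). I do not anticipate any real obstacle here; the only step that is not a one-line citation is (ii)$\Rightarrow$(iii), but this is entirely standard and can be dispatched by quoting the Akizuki characterization of Artinian rings (Noetherian $+$ Krull dimension zero). Everything else is essentially bookkeeping, assembling results that the paper has already proved or cited.
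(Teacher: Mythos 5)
Your proposal is correct and takes essentially the same route as the paper's proof: the identical cycle $(i)\Rightarrow(ii)$ via Corollary 3, $(ii)\Rightarrow(iii)$ via the Noetherian-plus-zero-dimensional-implies-Artinian fact (which the paper cites as [5, Proposition 8.38] and you instead sketch by the standard minimal-primes-and-nilpotent-nilradical argument), $(iii)\Rightarrow(i)$ via Corollary 1, together with $(ii)\Leftrightarrow(iv)$ from Arapovic's theorem and $(i)\Leftrightarrow(v)$ from Theorem 7 using that Noetherian rings are Laskerian. The only difference is that you prove the one cited step inline rather than quoting it, which is immaterial.
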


\begin{proof}
$(i)\Rightarrow(ii)$ follows from Corollary 3,\newline$(ii)\Rightarrow(iii)$
follows from [5, Proposition 8.38],\newline$(iii)\Rightarrow(i)$ follows from
Corollary 1,\newline$(ii)\Leftrightarrow(iv)$ follows from [1, Theorem
6],\newline$(i)\Leftrightarrow(v)$ follows from Theorem 7.
\end{proof}

\end{document}